\documentclass[12pt]{article}  

\usepackage{amsmath,amsthm,amssymb}
\usepackage[mathscr]{eucal}
\usepackage{enumerate}

\setlength{\topmargin}{-50pt}
\setlength{\evensidemargin}{-10pt}
\setlength{\oddsidemargin}{-10pt}
\setlength{\textheight}{650pt}
\setlength{\textwidth}{470pt}

\newtheorem{theorem}{Theorem}
\newtheorem{proposition}[theorem]{Proposition}
\newtheorem{definition}[theorem]{Definition}



\newcommand{\Z}{{\mathbb Z}}

\newcommand{\even}{{\bf{e}}}
 \newcommand{\odd}{{\bf{o}}}

\newcommand{\Q}{{\mathbb Q}}

\newcommand{\D}{{\mathcal{D}}_k}

\newcommand{\SL}{{\rm SL}_2 ({\mathbb Z})}

\newcommand{\cz}{\widetilde{\zeta}}

\title{On a conjecture for representations of integers as sums of squares and double shuffle relations}
\author{Koji Tasaka}
\date{}

\begin{document}

\maketitle

\begin{abstract}
In this paper, we prove a conjecture of Chan and Chua for the number of representations of integers as sums of $8s$ integral squares. The proof uses a theorem of Imamo\={g}lu and Kohnen and the double shuffle relations satisfied by the double Eisenstein series of level $2$.
\end{abstract}

\section{Introduction}
For positive integers $s$ and $n$, let $r_s(n)$ (resp. $t_s(n)$) denote the number of representations of $n$ as a sum of $s$ integral squares (resp. triangular numbers). Extensive studies on $r_s(n)$ and $t_s(n)$ have been carried out since the times of Fermat, Euler, and Lagrange. For the histories and recent works of these numbers, we refer the reader to \cite{ck,d,fy,g,kil,ki}. In the current paper, we prove new explicit formulas for $r_{8s}(n)$ and $t_{8s}(n)$ ($s\geq2$) which were conjectured by Chan and Chua \cite{cc} (see also \cite[Remark p.820]{ki}).

For integers $k\ge0$ and $n\ge1$, let $\sigma_{k} (n)$ be defined as usual by $\sigma_{k}(n)=\sum_{d|n} d^{k}$. We define
\[ \sigma_{k}^{i\infty} (n) = \sum_{d|n} (-1)^d d^{k} \mbox{\ and \ } \sigma_{k}^0 (n) =\sum_{\substack{d|n \\ n/d:{\rm odd}}} d^{k} \]
and set $\sigma_{k}^{i\infty} (0) = (1-2^{k+1}) B_{k+1}/2(k+1)$ ($B_k:$ Bernoulli number). We also define 
\[  \rho^{i\infty}_{r,s} (n)= \sum_{m=0}^{n} \sigma_{r}^{i\infty} (m) \sigma_{s}^{i\infty} (n-m) \mbox{\ and\ } \rho^{0}_{r,s} (n)= \sum_{m=1}^{n-1} \sigma_{r}^{0} (m) \sigma_{s}^{0} (n-m) . \]

\begin{theorem}\label{3}
For any positive integer $s\geq2$, there exist unique rational numbers $ \mu_s(l) \ (l=2,3,\ldots,s)$ such that 
\[ r_{8s} (n) = (-1)^n\frac{ 2^{4s}}{(4s-2)!} \sum_{l=2}^s \mu_s(l) {4s-2 \choose 2l-1} \rho^{i\infty}_{4s-2l-1,2l-1} (n) \quad (n\geq0)  \]
and 
\[ t_{8s} (n-s) =\frac{1}{(4s-2)!}  \sum_{l=2}^s \mu_s(l) { 4s-2 \choose 2l-1}  \rho^0_{4s-2l-1,2l-1} (n) \quad (n\geq s). \]
\end{theorem}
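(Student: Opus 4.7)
My strategy is to recast both right-hand sides as coefficients of explicit modular forms of weight $4s$ for $\Gamma_0(2)$ at the two cusps $i\infty$ and $0$, and to pin down the cusp-form part by leveraging the double shuffle relations for the level-$2$ double Eisenstein series $\tDE$.

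First, set $q=e^{2\pi i\tau}$ and introduce the generating series $F^{\infty}_s(\tau)=\sum_{n\ge 0}(-1)^n r_{8s}(n)q^n$ and $F^{0}_s(\tau)=\sum_{n\ge s}t_{8s}(n-s)q^n$. Standard theta-function manipulations show that these are modular forms of weight $4s$ on $\Gamma_0(2)$, and that the Atkin--Lehner involution swapping the two cusps interchanges $F^{\infty}_s$ with a scalar multiple of $F^{0}_s$. Next, the coefficient $\sigma_k^{i\infty}(n)$ is exactly the $n$-th Fourier coefficient at $i\infty$ of a weight-$(k+1)$ level-$2$ Eisenstein series $G_{k+1}^{\infty}$, whose normalized constant term is the prescribed $\sigma_k^{i\infty}(0)$; similarly $\sigma_k^{0}(n)$ comes from a companion Eisenstein series $G_{k+1}^{0}$ that vanishes at $i\infty$. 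Consequently, the convolutions $\rho^{i\infty}_{r,s}(n)$ and $\rho^{0}_{r,s}(n)$ are the Fourier coefficients of the products $G_{r+1}^{\infty}G_{s+1}^{\infty}$ and $G_{r+1}^{0}G_{s+1}^{0}$, i.e., of double Eisenstein-type quantities sitting inside $\tDE$.

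Second, I invoke the theorem of Imamo\={g}lu and Kohnen to expand $F^{\infty}_s$ (which, modulo the classical Eisenstein contribution handled separately, lies in $S_{4s}(\Gamma_0(2))$) in terms of products of level-$2$ Eisenstein series. The double shuffle (stuffle and integral shuffle) relations on $\tDE$ supply enough linear combinations of $G_a^{\infty}G_b^{\infty}$ to span the relevant cusp-form space, which yields the claimed ansatz $\sum_{l=2}^{s}\mu_s(l)\binom{4s-2}{2l-1}\rho^{i\infty}_{4s-2l-1,2l-1}(n)$. Applying the Atkin--Lehner involution, which exchanges $G^{\infty}$ with $G^{0}$, transports this identity to the cusp $0$ and produces the $t_{8s}$ formula with the \emph{same} coefficients $\mu_s(l)$. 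Uniqueness of the $\mu_s(l)$ then follows from linear independence of $\{G_{4s-2l}^{\infty}G_{2l}^{\infty}\}_{l=2}^{s}$ modulo the Eisenstein part, which is a dimension count in $M_{4s}(\Gamma_0(2))$.

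The main obstacle will be this second step: establishing that the products produced by the double shuffle relations actually cover the cusp-form component of $F^{\infty}_s$. Concretely, one must quantify the span of $\{G_{4s-2l}^{\infty}G_{2l}^{\infty}\}_{l=2}^{s}$ in $S_{4s}(\Gamma_0(2))$ modulo Eisenstein contributions, and check the compatibility under the Atkin--Lehner swap so that a single set of rationals $\mu_s(l)$ governs both formulas simultaneously. The rigidity provided by the double shuffle structure of $\tDE$---which constrains how products of level-$2$ Eisenstein series decompose modulo lower-depth Eisenstein contributions---will be the decisive input.
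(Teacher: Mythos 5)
Your outline follows the paper's route (modular forms of weight $4s$ on $\Gamma_0(2)$, the Imamo\={g}lu--Kohnen generation result, the involution $\tau\mapsto -1/(2\tau)$ exchanging the cusps, and a dimension count for uniqueness), but the step you defer as ``the main obstacle'' is not a verification to be quantified later: it is the entire mathematical content of the paper's Theorem \ref{2}, and your proposal gives no argument for it. Imamo\={g}lu--Kohnen prove that the \emph{mixed} products $G_{2l}^{0}(\tau)G_{4s-2l}^{i\infty}(\tau)$ generate $S_{4s}^{\Q}(2)$; what your ansatz requires is that the $s-1$ \emph{same-cusp} products $G_{2l}^{0}G_{4s-2l}^{0}$ $(2\le l\le s)$, together with $G_{4s}^{0}$, span (hence form a basis of) $\Q\cdot G_{4s}^{0}\oplus S_{4s}^{\Q}(2)$ --- and your appeal to ``uniqueness by linear independence'' presupposes exactly the same fact. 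The paper bridges the gap from mixed to same-cusp products by (i) the identity $G_r^0 G_s^{i\infty}=2^{-s}\bigl((2^s-1)P^{\odd\even}_{r,s}(\tau)-P^{\odd\odd}_{r,s}(\tau)\bigr)$, (ii) constructing extended double Eisenstein series of level $2$ with explicit correction terms $\varepsilon^{\even\odd}_{r,s},\varepsilon^{\odd\even}_{r,s},\varepsilon^{\odd\odd}_{r,s}$ and proving (Theorem \ref{6}, a substantial generating-function computation) that they satisfy the double shuffle relations \eqref{e_2_1}--\eqref{e_2_2} of the formal double zeta space $\D$, and (iii) importing from the Kaneko--Tasaka results on $\D$ that every $P^{\odd\even}_{r,s}$ with even $r,s$ is a $\Q$-combination of $Z^{\odd}_k$ and the $P^{\odd\odd}$ with even indices, together with the summation formula \eqref{e_2_3} used to eliminate $P^{\odd\odd}_{2,k-2}$. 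Invoking ``the rigidity of the double shuffle structure'' asserts the conclusion of (ii)--(iii) without proving it; in particular the correction terms needed so that the relations hold when $r$ or $s$ equals $1$ or $2$ are where most of the work lies.

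A second, smaller omission: the claimed formulas contain no standalone term proportional to $\sigma_{4s-1}^{0}(n)$ or $\sigma_{4s-1}^{i\infty}(n)$, so you must show that the coefficient of the single Eisenstein series $G_{4s}^{0}$ vanishes in the expansion of $T(\tau)^{8s}$; saying the Eisenstein contribution is ``handled separately'' does not do this. The paper kills it by comparing vanishing orders at $i\infty$: ${\rm ord}_{\infty}T(\tau)^{8s}=s\ge 2$, while ${\rm ord}_{\infty}G_{4s}^{0}=1$ and ${\rm ord}_{\infty}G_{2l}^{0}G_{4s-2l}^{0}=2$, forcing $\alpha=0$. Note also that this argument is available at the cusp $0$ (i.e., for $T^{8s}$), and the $\theta^{8s}$ identity is then obtained by applying the involution via \eqref{3a} and $\tau\mapsto\tau+1/2$, not the other way around; with these two points supplied, your plan becomes the paper's proof.
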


\noindent
The first several values of $\mu_s{(l)}$ are given in the following table.

%
%
\begin{tabular}{lllllll}
\hline\noalign{\smallskip}
$\mu_s(l)$ & $l=2$ & $l=3$ & $l=4$ & $l=5$ & $l=6$\\
\noalign{\smallskip}\hline\noalign{\smallskip}
$s=2$ &  $36$  \\
$s=3$ & $420$ & $-200$ \\
$s=4$ & $3168$ & $-3600$ & $1764$  \\
$s=5$ & $21060$ & $-30810$ & $36860$ & $-19116$ \\
$s=6$ &  $\frac{49605048}{343} $& $-\frac{ 77902500}{343}$ & $\frac{15741540}{49}$ & $ -\frac{139785750}{343}$ & $\frac{74727180}{343}$ \\
\noalign{\smallskip}\hline
\end{tabular}

\

For the proof of Theorem \ref{3}, we use the theory of modular forms on the congruence subgroup $\Gamma_0(2)=\left\{\left( \begin{array}{cc} a&b \\ c&d \end{array}\right) \in \SL\ \Big| \ c\equiv 0 \pmod{ 2} \right\}$. Let $\tau$ be a variable on the upper half-plane, and 
\[ \theta (\tau) = \sum_{n\in \Z} q^{n^2} \] 
and 
\[ T(\tau) =q^{1/8} \sum_{n\geq0} q^{n(n+1)/2} \]
be the standard theta functions $(q=e^{2\pi i \tau})$. Then we have, for a positive integer $s$, $\theta(\tau)^s=\sum_{n\geq0} r_s(n)q^n$ and $T(\tau)^{8s} = q^s \sum_{n\geq 0} t_{8s} (n) q^n$, and it is well known that the function $\theta(\tau)^{s}$ is a modular form of weight $s/2$ and level $4$ and the function $T (\tau)^{8s}$ is a modular form of weight $4s$ and level $2$. The two forms $\theta(\tau)^{8s}$ and $T(\tau)^{8s}$ are related to each other by the transformation formula
\[  2^{8s} (2\tau+1)^{-4s} T\left( \frac{-1}{2\tau+1} \right)^{8s} =  \theta (\tau)^{8s}. \]
Let $G_k^0 (\tau)$ and $G_k^{i\infty} (\tau) $ be the Eisenstein series of weight $k$ on $\Gamma_0(2)$ for the cusps $0$ and $i\infty$ (the precise definitions will be recalled in section 2). Then Theorem \ref{3} is equivalent to the following theorem.
\begin{theorem}\label{1} 
For any positive integer $s\geq2$, there exist unique rational numbers $\mu_s(l) \ (l=2,3,\ldots, s)$ such that
\begin{equation}
\label{theta} \theta (\tau)^{8s} =2^{8s} \sum_{l=2}^s\mu_s(l)  G_{2l}^{i\infty} (\tau+\frac{1}{2}) G_{4s-2l}^{i\infty} (\tau+\frac{1}{2}) 
\end{equation}
and
\begin{equation}
\label{tri} T(\tau)^{8s} = \sum_{l=2}^{s} \mu_s(l)  G_{2l}^0 (\tau) G_{4s-2l}^0 (\tau) .
\end{equation}
\end{theorem}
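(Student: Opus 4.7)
The plan is to establish (\ref{tri}) by applying the Imamo\={g}lu--Kohnen theorem, and then deduce (\ref{theta}) from it via the transformation formula relating $\theta^{8s}$ and $T^{8s}$.

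For (\ref{tri}), the function $T(\tau)^{8s}$ is a cusp form of weight $4s$ on $\Gamma_0(2)$ vanishing at the cusp $i\infty$ to order $s$. The theorem of Imamo\={g}lu and Kohnen, to be recalled in Section~2, should represent such a cusp form as a linear combination of products $G_a^0(\tau) G_b^0(\tau)$ of level-$2$ Eisenstein series attached to the cusp $0$ with $a + b = 4s$. By the symmetry $G_a^0 G_b^0 = G_b^0 G_a^0$ one may restrict to $a = 2l$ with $l \leq 2s - l$; the omission of $l = 1$ (which would involve the quasi-modular $G_2^0$) can be absorbed into the remaining products using the double shuffle relations for the level-$2$ double Eisenstein series $\DE$. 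This yields the expansion in the asserted range $l = 2, \ldots, s$, and uniqueness of the $\mu_s(l)$ follows from the linear independence of these $s - 1$ products in $M_{4s}(\Gamma_0(2))$, itself a consequence of the double shuffle machinery.

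With (\ref{tri}) in hand, I deduce (\ref{theta}) by substituting $\tau \mapsto -1/(2\tau+1)$ in (\ref{tri}) and applying the transformation formula $\theta(\tau)^{8s} = 2^{8s}(2\tau+1)^{-4s} T(-1/(2\tau+1))^{8s}$. The right-hand side of (\ref{tri}) transforms cleanly because $\tau \mapsto -1/(2\tau+1)$ factors as the translation $\tau \mapsto \tau + 1/2$ followed by the Atkin--Lehner involution $W_2 \colon \tau \mapsto -1/(2\tau)$, and $W_2$ interchanges the two cusps of $\Gamma_0(2)$. The weight-$k$ slash action under this composite sends $G_k^0(\tau)$ to an explicit power of $2$ times $G_k^{i\infty}(\tau + 1/2)$, and the resulting constants combine with the prefactor from the transformation formula to produce exactly $2^{8s}$, reproducing (\ref{theta}).

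The hardest part will be the double shuffle analysis: one must compute both the harmonic and shuffle expansions of $\DE$, compare them to obtain nontrivial linear relations among the products $G_{2l}^0 G_{4s-2l}^0$, and verify that these relations are strong enough both to reduce to the range $l = 2, \ldots, s$ and to establish the linear independence underlying uniqueness. The explicit rational values $\mu_s(l)$ recorded in the table above should emerge from this analysis, matching the conjectural values of Chan and Chua.
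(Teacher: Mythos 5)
Your overall strategy coincides with the paper's: establish \eqref{tri} from the Imamo\={g}lu--Kohnen theorem together with the double shuffle relations for the level-$2$ double Eisenstein series, then obtain \eqref{theta} by applying the Fricke involution $\tau\mapsto -1/(2\tau)$ composed with translation by $1/2$. Your second step is correct and is essentially the paper's computation using $(2\tau)^{-k}G_k^0(-1/(2\tau))=G_k^{i\infty}(\tau)$ (equation \eqref{3a}); the automorphy factors cancel against the prefactor $2^{8s}$ exactly as you indicate.

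However, your treatment of \eqref{tri} has genuine gaps. First, $T(\tau)^{8s}$ is not a cusp form on $\Gamma_0(2)$: it vanishes at $i\infty$ but not at the cusp $0$ (under the Fricke involution it corresponds, up to the automorphy factor and the constant $2^{8s}$, to $\theta^{8s}$, whose constant term is $1$); the correct statement, and the one the paper uses, is that $T^{8s}$ lies in $\Q\cdot G_{4s}^0\oplus S_{4s}^{\Q}(2)$. Relatedly, the Imamo\={g}lu--Kohnen theorem does not express such forms through the same-cusp products $G_a^0G_b^0$; it says that the mixed products $G_{2l}^0G_{4s-2l}^{i\infty}$ ($2\le l\le 2s-2$) together with $G_{4s}^0$ suffice. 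Converting these mixed products into the products $G_{2l}^0G_{4s-2l}^0$ ($2\le l\le s$) is precisely the content of Theorem \ref{2}, proved via the identity \eqref{*} and the formal double zeta space; this conversion, rather than ``absorbing $l=1$'' or proving independence (which follows from a dimension count once spanning is known), is the main job of the double shuffle machinery. Second, even granting the basis of Theorem \ref{2}, one a priori has $T^{8s}=\alpha G_{4s}^0+\sum_{l=2}^{s}\mu_s(l)G_{2l}^0G_{4s-2l}^0$, and your proposal never rules out the Eisenstein term. Cuspidality would not do it even if it held, because the products $G_{2l}^0G_{4s-2l}^0$ are themselves not cusp forms (they are nonvanishing at the cusp $0$), so a cusp form can have $\alpha\neq 0$ in this basis. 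The paper kills $\alpha$ by comparing orders at $i\infty$: ${\rm ord}_\infty T^{8s}=s\ge 2$ while ${\rm ord}_\infty G_{4s}^0=1$ and the products vanish to order $2$; this is exactly where the hypothesis $s\ge 2$ enters, which your argument never uses.
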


In \cite{ki}, Imamo\={g}lu and Kohnen proved a similar result saying that $T(\tau)^{8s}$ can be expressed as $\Q$-linear combinations of $G_{2l}^0(\tau) G_{4s-2l}^{i\infty}(\tau) \ (l=2,3,\ldots, 2s-2)$ and $G_{4s}^0 (\tau)$. They proved this by showing that the set $\{ G_{2l}^0(\tau) G_{4s-2l}^{i\infty}(\tau) \ (l=2,3,\ldots, 2s-2) \}$ generates the space of cusp forms of weight $4s$ on $\Gamma_0(2)$, using the Rankin--Selberg method and the Eichler--Shimura theory identifying spaces of cusp forms with spaces of periods. We use this result of Imamo\={g}lu and Kohnen and the `double Eisenstein series' to prove the following theorem, from which Theorem \ref{1} follows.

\begin{theorem}\label{2} For each positive even integer $k\geq4$, the set $\{ G^0_k (\tau) ,  G^0_{2l} (\tau) G^0_{k-2l} (\tau) \ ( 2\leq l \leq  [k/4]) \} $ is a basis of the space $ \Q\cdot G_k^0(\tau) \oplus S_k^{\Q} (2)$, where $S_k^{\Q} (2)$ is the $\Q$-vector space spanned by cusp forms of weight $k$ on $\Gamma_0(2)$ with rational Fourier coefficients.
\end{theorem}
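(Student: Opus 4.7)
My plan is to combine a dimension count, the theorem of Imamo\={g}lu--Kohnen quoted earlier, and the double shuffle relations for level-$2$ double Eisenstein series. First, recall that $\dim M_k(\Gamma_0(2))=[k/4]+1$ for even $k\ge 4$, with Eisenstein subspace $\Q G_k^0(\tau)\oplus \Q G_k^{i\infty}(\tau)$, so $\dim S_k^{\Q}(2)=[k/4]-1$. Every product $G_{2l}^0(\tau)G_{k-2l}^0(\tau)$ has vanishing constant term at the cusp $i\infty$ (since each factor does), so each member of the proposed set lies in $\Q G_k^0(\tau)\oplus S_k^{\Q}(2)$; this ambient space has dimension $[k/4]$, matching the cardinality of the proposed set. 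Thus it is equivalent to show that the proposed set spans.

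For spanning, the plan is to show that every mixed product $G_{2l}^0(\tau)G_{k-2l}^{i\infty}(\tau)$ lies in the $\Q$-linear span of $G_k^0(\tau)$ together with the products $G_{2j}^0(\tau)G_{k-2j}^0(\tau)$ for $2\le j\le [k/4]$. Combined with Imamo\={g}lu--Kohnen's result that these mixed products generate $S_k^{\Q}(2)$, this immediately yields the desired spanning. Note that no $G_k^{i\infty}(\tau)$ term is needed on the right, because both sides have vanishing constant term at $i\infty$. To produce such expressions, I would introduce level-$2$ double Eisenstein series $\mathscr{DE}_{r,s}(\tau)$ as iterated lattice sums with the appropriate parity/congruence conditions on each summation variable. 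These should satisfy a harmonic (stuffle) product of the form
\[
G_r^{\bullet}(\tau)G_s^{\bullet}(\tau)=\mathscr{DE}_{r,s}(\tau)+\mathscr{DE}_{s,r}(\tau)+G_{r+s}^{\bullet}(\tau)
\]
(with $\bullet\in\{0,i\infty\}$ encoding the condition on the corresponding lattice vector), together with a shuffle product expressing $\mathscr{DE}_{r,s}(\tau)$ as a $\Q$-linear combination of the $\mathscr{DE}_{a,b}(\tau)$ with $a+b=r+s$, modulo a correction lying in $\Q G_k^0(\tau)+\Q G_k^{i\infty}(\tau)+S_k^{\Q}(2)$.

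Taking the difference of the two product expressions---the double shuffle relation---eliminates the double Eisenstein series and yields, for each decomposition $r+s=k$ and each choice of parity, a modular identity purely among products $G_r^{\bullet}G_s^{\bullet}$, the Eisenstein series $G_k^0(\tau)$ and $G_k^{i\infty}(\tau)$, and a single cusp form. Assembling these identities across all relevant $(r,s)$ yields a linear system which, upon inversion, expresses each $G_{2l}^0(\tau)G_{k-2l}^{i\infty}(\tau)$ as a rational combination of $G_k^0(\tau)$ and the $G_{2j}^0(\tau)G_{k-2j}^0(\tau)$'s, completing the argument. The main obstacle will be twofold: first, defining the level-$2$ double Eisenstein series precisely enough that both product identities hold, with all level-$2$ regularization and boundary corrections tracked and kept rational; and second, verifying that the resulting linear system is invertible, which should reduce to the non-vanishing of a combinatorial determinant built out of binomial coefficients. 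The cutoff $l\le[k/4]$ is exactly what makes the set size match the target dimension, so this invertibility must be tight, and the combinatorial step is likely the crux.
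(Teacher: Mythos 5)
Your proposal is correct in outline and follows essentially the same route as the paper: membership in $\Q\cdot G_k^0(\tau)\oplus S_k^{\Q}(2)$ plus the dimension count $[k/4]$, the Imamo\={g}lu--Kohnen result that the mixed products $G_{2l}^0(\tau)G_{k-2l}^{i\infty}(\tau)$ generate $S_k^{\Q}(2)$, and regularized level-$2$ double Eisenstein series satisfying the double shuffle relations in order to rewrite those mixed products in terms of $G_k^0(\tau)$ and the products $G_{2j}^0(\tau)G_{k-2j}^0(\tau)$. The ``combinatorial inversion'' you single out as the crux is precisely the Kaneko--Tasaka theorem on the formal double zeta space of level $2$ (every $P^{\odd\even}_{r,s}$ with even indices is a $\Q$-combination of $Z^{\odd}_k$ and the even-index $P^{\odd\odd}$'s, together with the summation formula \eqref{e_2_3}), which the paper imports from \cite{kt} and transfers to the actual functions via the identity $G_r^0(\tau)G_s^{i\infty}(\tau)=2^{-s}\bigl((2^s-1)P^{\odd\even}_{r,s}(\tau)-P^{\odd\odd}_{r,s}(\tau)\bigr)$ and Theorem \ref{6}.
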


In section 2, we will give the proofs of Theorems 1 and 2 assuming Theorem 3. Section 3, which contains a review of the formal double zeta space and the double Eisenstein series, is devoted to the proof of Theorem 3. There we need the double shuffle relations of our newly defined double Eisenstein series whose proof is given in the last section.

\section{Proofs of Theorems 1 and 2}

In this section, we prove Theorems 1 and 2 assuming Theorem 3. For even $k\geq4$, let
\begin{equation}\label{gi}
G_k^{i\infty} (\tau) =(1-2^{k})\frac{B_k}{2k}  + \frac{1}{2^{k} (k-1)!}\sum_{n>0} \sigma_{k-1}^{i\infty} (n) q^n =\frac{1}{2^{k} (k-1)!}\sum_{n\geq 0} \sigma_{k-1}^{i\infty} (n) q^n 
\end{equation}
and
\begin{equation}\label{g0}
G_k^0(\tau) = \frac{1}{(k-1)!} \sum_{n>0} \sigma_{k-1}^0 (n) q^n 
\end{equation}
be the Eisenstein series of weight $k$ on $\Gamma_0(2)$ for the cusps $i\infty$ and $0$ introduced in section 1. Recall that the $\Q$-vector space of modular forms of weight $k$ on $\Gamma_0(2)$ with rational Fourier coefficients is equal to the space $\Q\cdot G_k^0\oplus \Q\cdot G_k^{i\infty} \oplus S_k^{\Q}(2)$. The modular form $T(\tau)^{8s}$ is an element of the space $\Q\cdot G_k^0(\tau) \oplus S_k^{\Q} (2)$ because ${\rm ord}_{\infty} T(\tau)^{8s} >0$, where ${\rm ord}_{\infty} f(\tau) $ is the vanishing order of $f(\tau)$ at $\tau = i \infty \ (q=0)$. Then, assuming Theorem \ref{2}, there exist unique rational numbers $\alpha, \mu_s(l) \ (l=2,3,\ldots, s)$ such that 
\[ T(\tau)^{8s} = \alpha G_{4s}^0(\tau) + \sum_{l=2}^{s} \mu_s(l)  G_{2l}^0 (\tau) G_{4s-2l}^0 (\tau). \]
Since ${\rm ord}_{\infty} T(\tau)^{8s}=s\geq2, {\rm ord}_{\infty} G_{4s}^0 (\tau)=1 $ and ${\rm ord}_{\infty} G_{4s-2l}^0(\tau) G_{2l}^0(\tau) =2$, we find that $\alpha=0$. Thus, we have the assertion \ref{tri} of Theorem 2. On the other hand, from 
\[2^{8s} (2\tau)^{-4s} T ( \frac{-1}{2\tau} )^{8s} = \theta(\tau+\frac{1}{2})^{8s} \]
and 
\begin{align} \label{3a}
(2\tau)^{-k} G_k^0 ( \frac{-1}{2\tau} ) =   G_k^{i\infty} (\tau) \quad (k\geq4 : {\rm even}), 
\end{align}
we have
\begin{align*}
 \theta(\tau+\frac{1}{2})^{8s} &= 2^{8s} (2\tau)^{-4s} T (-\frac{1}{2\tau} )^{8s} \\
&=  2^{8s} (2\tau)^{-4s} \sum_{l=2}^s \mu_s(l) G_{4s-2l}^0 (-\frac{1}{2\tau}) G_{2l}^0 (-\frac{1}{2\tau})\\
&=2^{8s}  \sum_{l=2}^s \mu_s(l) G_{4s-2l}^{i\infty} (\tau+\frac{1}{2}) G_{2l}^{i\infty} (\tau+\frac{1}{2}).
\end{align*}
Hence, we have the formula \eqref{theta} by letting $\tau \rightarrow \tau+1/2$. This completes the proof of Theorem 2. Consequently, we have
\begin{align*}
T(\tau)^{8s} &= q^s\sum_{n\geq0} t_{8s} (n) q^n\\
&= \sum_{n>0} \left( \sum_{l=2}^s \frac{\mu_s(l) }{(4s-2l-1)! (2l-1)!} \sum_{m=1}^{n-1} \sigma_{4s-2l-1}^0(m) \sigma_{2l-1}^0 (n-m) \right) q^n \\
&=  \frac{1}{(4s-2)!} \sum_{n>0} \left( \sum_{l=2}^s \mu_s(l) \binom{4s-2}{2l-1} \rho^0_{4s-2l-1,2l-1} (n) \right) q^n 
\end{align*}
and
\begin{align*}
\theta(\tau)^{8s} &=  \sum_{n\geq0} r_{8s} (n) q^n \\
&= 2^{8s} \sum_{n\geq0}(-1)^n \left( \sum_{l=2}^s \frac{\mu_s(l)}{2^{4s} (4s-2l-1)! (2l-1)!}  \sum_{m=0}^{n} \sigma_{4s-2l-1}^{i\infty} (m) \sigma_{2l-1}^{i\infty} (n-m) \right) q^n \\
&= \frac{2^{4s}}{ (4s-2)!} \sum_{n\geq0} (-1)^n \left( \sum_{l=2}^s \mu_s(l) \binom{4s-2}{2l-1} \rho_{4s-2l-1,2l-1}^{i\infty} (n)  \right) q^n,
\end{align*}
which gives Theorem 1 by comparing the coefficients of $q^n$. \qed

\section{The formal double zeta space and the proof of Theorem \ref{2}}

To prove Theorem \ref{2}, we review some results of \cite{kt}. Throughout this section, we assume $k\geq4$ is even. The formal double zeta space for level $2$, which has been defined by Kaneko and the author \cite{kt}, is the $\Q$-vector space generated by formal variables $Z^{\even \odd}_{r,s},Z^{\odd \even}_{r,s},Z^{\odd \odd}_{r,s},P^{\odd \even}_{r,s},P^{\odd \odd}_{r,s}\ (r+s=k)$ and $Z^{\odd}_k$ with relations
\begin{align}
\label{e_2_1} &\ P^{\odd \even}_{r,s} = Z^{\odd \even}_{r,s} + Z^{\even \odd}_{s,r} = \sum_{i+j=k} \binom{i-1}{r-1} Z^{\odd \even}_{i,j} + \sum_{i+j=k} \binom{i-1}{s-1} Z^{\odd \odd}_{i,j}, \\
\label{e_2_2} & P^{\odd \odd}_{r,s} = Z^{\odd \odd}_{r,s}+ Z^{\odd \odd}_{s,r} +Z^{\odd}_{k} = \sum_{i+j=k} \left( \binom{i-1}{r-1} +\binom{i-1}{s-1} \right) Z^{\even \odd}_{i,j}.
\end{align}
Whenever we write $r+s=k$ or $i+j=k$ without comment,  it is assumed that the variables are integers greater than $1$. We denote this space by $\D$, so that
\begin{align*}
&\D = \frac{ \{ \Q\mbox{-linear combinations of formal variables }Z^{\even \odd}_{r,s},Z^{\odd \even}_{r,s},Z^{\odd \odd}_{r,s},P^{\odd \even}_{r,s},P^{\odd \odd}_{r,s},Z^{\odd}_k  \}}{\langle \mbox{relations\ } \eqref{e_2_1},\eqref{e_2_2} \rangle }.
\end{align*}
For the original version of this space, we refer the reader to \cite{gkz}, which gives the case of $\SL$. In \cite[Theorem 1]{kt}, we proved that every $P^{\odd \even}_{r,s} \ (r,s\geq2: {\rm \ even} )$ with $r+s=k$ can be expressed as $\Q$-linear combinations of $Z^{\odd}_k$ and $P^{\odd \odd}_{r,s} \ (r,s\geq2: {\rm \ even} )$ with $r+s=k$, and that the following summation formula holds:
\begin{equation}\label{e_2_3}
\frac{1}{4} Z^{\odd}_k = \sum_{\begin{subarray}{c} r=2\\ r : {\rm even} \end{subarray}}^{k-2} Z^{\odd \odd}_{r,k-r}.
\end{equation}

\noindent
Therefore, by \eqref{e_2_2} and \eqref{e_2_3}, the $\Q$-vector subspace of $\D$ spanned by $ P^{\odd \even}_{r,s},P^{\odd \odd}_{r,s}\ (r,s:{\rm even})$ and $Z^{\odd}_k$ with $r+s=k$ is generated by $ P^{\odd \odd}_{2r,k-2r} \ ( 2\leq r \leq [k/4])$ and $Z^{\odd}_k$ (we used \eqref{e_2_3} to eliminated $P_{2,k-2}^{\odd\odd}$). Summarizing, for even $k\geq4$, we have
\begin{equation}\label{e_2_4}
\langle P^{\odd \even}_{2r,k-2r},P^{\odd \odd}_{2r,k-2r},Z^{\odd}_k \mid 1\leq r \leq k/2-1 \rangle_{\Q} = \langle  P^{\odd \odd}_{2r,k-2r},Z^{\odd}_k \mid  2\leq r \leq [k/4] \rangle_{\Q}.
\end{equation}

The remainder of this section will be devoted to the proof of Theorem \ref{2}. Let $\even$ (resp. $\odd$) be the set of even integers (resp. odd integers) and $\tau$ be a variable on the upper half-plane. 
Consider the double Eisenstein series, which are defined by
\begin{align}\label{e_2_5} 
G^{ABCD}_{r,s} (\tau)& :=\frac{1}{(2\pi i)^{r+s}} \sum_{\begin{subarray}{c} \lambda > \mu>0 \\ \lambda \in A\tau+B \\ \mu \in C\tau + D \end{subarray}} \frac{1}{\lambda^r \mu^s} =\frac{1}{(2\pi i)^{r+s}} \sum_{\begin{subarray}{c} m\tau+n > m'\tau+n' >0 \\ m\in A,n \in B, m' \in C, n' \in D \end{subarray}} \frac{1}{(m\tau+n)^r (m'\tau+n')^s},
\end{align}
where $A,B,C,D \in \{ \even,\odd ,\Z\}$, and the inequality $m\tau+n>0$ means $m>0$ or $m=0, n>0$ and $m\tau+n>m'\tau+n'$ means $(m-m')\tau+(n-n')>0$. The series \eqref{e_2_5} converge absolutely for $r>2$ and $s>1$. 
Hereinafter, we consider the following three types of double Eisenstein series:
\begin{align} \label{e_2_6} 
Z^{\even \odd}_{r,s} (\tau) &= G_{r,s}^{\even \Z \odd \Z}(\tau) ,\  Z^{\odd \even}_{r,s}(\tau)= G_{r,s}^{\odd \Z \even \Z} (\tau) ,\  Z^{\odd \odd}_{r,s} (\tau) =G_{r,s}^{\odd \Z \odd \Z}(\tau).
\end{align}
For an integer $k\geq1$, we define the Eisenstein series by 
\begin{equation}\label{e_2_7}
G_k(\tau) =\cz(k) +\frac{ (-1)^k}{(k-1)!}\sum_{n>0} \sigma_{k-1} (n)q^n , 
\end{equation}
where $q=e^{2\pi i \tau}$, $\cz(k)=(2\pi i)^{-k}\zeta(k) \ (k\geq2)$, and we set $\cz(1)=0$. We can rewrite \eqref{e_2_7} in the form $G_k(\tau) =(2\pi i)^{-k} \sum_{m\tau+n>0} (m\tau+n)^{-k}$ for $k>2$ by using the Lipschitz formula (see section 4). For $k\geq1$, we let 
\begin{align}\label{e_2_8} 
&G_k^{i\infty} (\tau) = G_k(2\tau) - 2^{-k} G_k(\tau), \\
\label{e_2_9}
&G_k^0 (\tau) = G_k(\tau)-G_k(2\tau) ,
\end{align}
which is compatible with \eqref{gi} and \eqref{g0} when $k\geq4$ is even. We can also rewrite \eqref{e_2_8} and \eqref{e_2_9} in the forms 
\[G_k^{i\infty} (\tau)= \frac{1}{(2\pi i)^k} \sum_{\begin{subarray}{c} m\tau+n>0 \\ m\in \even, n\in \odd \end{subarray} } \frac{1}{(m\tau+n)^k}  , \ G_k^0 (\tau)= \frac{1}{(2\pi i)^k} \sum_{\begin{subarray}{c} m\tau+n>0 \\ m\in \odd, n\in \Z \end{subarray} } \frac{1}{(m\tau+n)^k} \]
for $k>2$.
Note that for odd $k\geq1$, \eqref{e_2_8} and \eqref{e_2_9} define non-zero holomorphic functions on the upper half-plane.  
For positive integers $r$ and $s$, we define 
\[ P^{\odd \even}_{r,s} (\tau) = G_r^0(\tau) G_s(2\tau) +\delta_{r,2} \frac{G_s(2\tau)'}{4s} +\delta_{s,2} \frac{G_r^0(\tau)'}{4r} \]
and
\[  P^{\odd \odd}_{r,s}(\tau) = G_r^0(\tau) G_s^0(\tau) +\delta_{r,2} \frac{G_s^0(\tau)'}{4s} +\delta_{s,2} \frac{G_r^0(\tau)'}{4r} ,\]
where the differential operator $'$ means $q\cdot d/dq$ and $\delta_{r,s}$ is the Kronecker delta.

\noindent
In section 4, we will give the definition of double Eisenstein series \eqref{e_2_6} for all integers $r,s\geq1$ and prove (Theorem \ref{6}) that they satisfy the double shuffle relations (6) and (7) under the identification
\begin{align*}
P_{r,s}^{\odd\even} \leftrightarrow P_{r,s}^{\odd\even} (\tau) ,\ P_{r,s}^{\odd\odd} \leftrightarrow P_{r,s}^{\odd\odd} (\tau) ,\ Z_{k}^{\odd} \leftrightarrow G_k^{0} (\tau) , \ \\
Z_{r,s}^{\even\odd} \leftrightarrow Z_{r,s}^{\even\odd} (\tau) , \ Z_{r,s}^{\odd\even} \leftrightarrow Z_{r,s}^{\odd\even} (\tau) , \ Z_{r,s}^{\odd\odd} \leftrightarrow Z_{r,s}^{\odd\odd} (\tau) .
\end{align*}

Assuming this, we continue the proof of Theorem \ref{2}. An easy computation shows that 
\begin{equation}\label{*}
G_r^{0} (\tau) G_s^{i\infty}(\tau) = 2^{-s} ((2^s-1)P^{\odd \even}_{r,s}(\tau) - P^{\odd \odd}_{r,s}(\tau))\quad (r,s\geq4, \mbox{which implies $\delta_{r,2}=\delta_{s,2}=0$}) . 
\end{equation}
As mentioned in section 1, the products $G_{k-2l}^{0} (\tau) G_{2l}^{i\infty}(\tau) \ (l=2,3,\ldots,k/2-2)$ generate $S_k^{\Q}(2)$. Therefore, by \eqref{e_2_4} and \eqref{*}, the space spanned by $P^{\odd \odd}_{2r,k-2r} (\tau) \ (2\leq r\leq [k/4])$ and $G_k^0(\tau)$ contains $\Q\cdot G_k^0(\tau) \oplus S_k^{\Q}(2)$, which has dimension $[k/4$]. On the other hand, the number of generators of the space $\langle P^{\odd\odd}_{2r,k-2r} (\tau),G_k^0(\tau) \mid  2\leq r\leq [k/4] \rangle_{\Q}$ is equal to $[k/4]$, and hence we have 
\[ \langle P^{\odd \odd}_{2r,k-2r} (\tau),G_k^0(\tau) \mid  2\leq r\leq [k/4] \rangle_{\Q} = \Q\cdot G_k^0(\tau) \oplus S_k^{\Q}(2) .\]
This completes the proof of Theorem 3. \qed

\section{Double Eisenstein series and double shuffle relations}

In this section, we give an extended definition of the double Eisenstein series and prove their double shuffle relations. For $k\geq1$, we can rewrite $G_k$ and $G_k^0$ in the forms 
\begin{align*}
G_k(\tau) =  \cz(k)+\frac{(-1)^k}{(k-1)!} \sum_{\begin{subarray}{c}u>0 \\ m>0\end{subarray}} u^{k-1} q^{um}, \ 
G_k^0(\tau) = \frac{(-1)^k}{(k-1)!} \sum_{\begin{subarray}{c}u>0 \\ m\in\odd_{>0} \end{subarray}} u^{k-1} q^{um}.
\end{align*}
For convenience, we use the following convention:
\[ Z^{\odd}_k(\tau) = G_k^0(\tau) ,\ Z^{\even}_k(\tau) = G_k(2\tau)  .\]
Since the double Eisenstein series defined by \eqref{e_2_6}  is invariant under the translation $\tau \rightarrow \tau+1$, we can consider the Fourier expansion of \eqref{e_2_6} for $s>1$ and $r>2$. To describe the expansion, we use the power series $\varphi_k(\tau)$ in $\Q[[q]]$ defined by
\[ \varphi_k(\tau) = \frac{(-1)^{k}}{(k-1)!} \sum_{u>0} u^{k-1} q^u \quad (k\geq1 ).  \]
\begin{proposition}\label{4} For any integers $s>1$ and $r>2$, we have
\begin{align*}
Z^{\even \odd}_{r,s} (\tau) &= \sum_{\begin{subarray}{c} m>m'>0 \\ m\in\even, m'\in \odd \end{subarray} } \varphi_r (m\tau) \varphi_s (m'\tau), \\
Z^{\odd \even}_{r,s} (\tau) &= \sum_{\begin{subarray}{c} m>m'>0 \\ m\in\odd, m'\in \even \end{subarray} } \varphi_r (m\tau) \varphi_s (m'\tau) + \cz (s) \sum_{m\in \odd_{>0}} \varphi_r(m\tau) , \\
 Z^{\odd \odd}_{r,s} (\tau) &=\sum_{\begin{subarray}{c} m>m'>0 \\ m\in\odd, m'\in \odd \end{subarray} } \varphi_r (m\tau) \varphi_s (m'\tau)\\
 &  + \sum_{p+h=r+s} \left( (-1)^s \binom{p-1}{s-1} + (-1)^{p+r} \binom{p-1}{r-1} \right) \cz(p) \sum_{m\in \odd_{>0} } \varphi_h (m\tau) ,
 \end{align*}
 where we use our usual convention that the condition ``$p+h=r+s$'' tacitly includes ``$p,h\geq1$''.
\end{proposition}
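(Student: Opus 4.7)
The plan is to compute the Fourier expansion directly using the Lipschitz summation formula $(2\pi i)^{-k}\sum_{n\in\Z}(m\tau+n)^{-k}=\varphi_{k}(m\tau)$ (valid for $k\geq 2$, $m\geq 1$), decomposing the double sum according to the relative size and parity of the $\tau$-coefficients of $\lambda=m\tau+n$ and $\mu=m'\tau+n'$. For $Z^{\even\odd}_{r,s}(\tau)$, the condition $\mu>0$ together with $m'$ odd forces $m'\geq 1$; since $m$ is even the two coefficients differ, so $\lambda>\mu$ forces $m>m'\geq 1$, and applying Lipschitz to each inner sum over $n,n'$ yields the stated identity. For $Z^{\odd\even}_{r,s}(\tau)$ I split according to whether $m'=0$ or $m'\geq 2$: the case $m'\geq 2$ again gives the displayed sum by parity and Lipschitz, while $m'=0$ forces $n'\geq 1$ and contributes $\cz(s)\sum_{m\in\odd_{>0}}\varphi_{r}(m\tau)$ via $\sum_{n'\geq 1}(n')^{-s}=\zeta(s)$.

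The substantive case is $Z^{\odd\odd}_{r,s}(\tau)$, where one must also treat the diagonal $m=m'$. The off-diagonal piece $m>m'\geq 1$ is handled as before. For the diagonal piece $\sum_{m\in\odd_{>0}}\sum_{n>n'}(m\tau+n)^{-r}(m\tau+n')^{-s}$, I would apply the partial fraction identity
\[
\frac{1}{Y^{r}X^{s}}=\sum_{i=1}^{r}(-1)^{s}\binom{r+s-i-1}{s-1}\frac{1}{a^{r+s-i}Y^{i}}+\sum_{j=1}^{s}(-1)^{s-j}\binom{r+s-j-1}{r-1}\frac{1}{a^{r+s-j}X^{j}},
\]
with $X=m\tau+n'$, $Y=m\tau+n=X+a$, $a=n-n'\geq 1$. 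For each $i\geq 2$ (respectively $j\geq 2$) the sum over $(a,n')$ factors, after reindexing $n=n'+a$, into $\zeta(r+s-i)$ (resp.\ $\zeta(r+s-j)$) times a Lipschitz sum, producing precisely the contributions $(-1)^{s}\binom{p-1}{s-1}\cz(p)\varphi_{h}(m\tau)$ and $(-1)^{p+r}\binom{p-1}{r-1}\cz(p)\varphi_{h}(m\tau)$ in the Proposition, under the substitution $p=r+s-i$ or $p=r+s-j$ and $h=i$ or $h=j$; the needed sign identity $(-1)^{s-j}=(-1)^{p+r}$ follows immediately from $p+h=r+s$.

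The main obstacle is the pair of boundary terms $i=1$ and $j=1$, which individually give conditionally convergent sums involving $\sum_{n}(m\tau+n)^{-1}$. The key observation is the binomial symmetry $\binom{r+s-2}{s-1}=\binom{r+s-2}{r-1}$, which allows one to combine them into
\[
(-1)^{s}\binom{r+s-2}{s-1}\frac{1}{a^{r+s-1}}\Bigl(\frac{1}{Y}-\frac{1}{X}\Bigr)=-(-1)^{s}\binom{r+s-2}{s-1}\frac{1}{a^{r+s-2}XY},
\]
an absolutely convergent expression whose inner $n'$-sum vanishes by the telescoping identity $\sum_{n'\in\Z}\bigl((m\tau+n')^{-1}-(m\tau+n'+a)^{-1}\bigr)=0$ (via symmetric partial sums, since both sides formally equal $\pi\cot(\pi m\tau)$). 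This is consistent with the Proposition, whose $h=1$ coefficient $(-1)^{s}\binom{r+s-2}{s-1}+(-1)^{(r+s-1)+r}\binom{r+s-2}{r-1}$ also vanishes by the same binomial symmetry together with $(-1)^{s-1}=-(-1)^{s}$. Collecting the off-diagonal, the boundary-vanishing, and the generic $i,j\geq 2$ contributions yields precisely the Fourier expansion stated in the Proposition.
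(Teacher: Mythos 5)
Your proof is correct and follows essentially the same route as the paper: split the defining sum according to the parities and relative sizes of $m,m'$, apply the Lipschitz formula, and use the same partial fraction decomposition (yours is the paper's identity, reindexed) for the diagonal $m=m'$ contribution to $Z^{\odd\odd}_{r,s}$. The only difference is in the justification of the boundary terms: where the paper cancels the two conditionally convergent $i=r-1$, $j=s-1$ pieces via Cauchy principal values, you combine them pointwise (using $\binom{r+s-2}{s-1}=\binom{r+s-2}{r-1}$) into an absolutely convergent telescoping expression whose inner sum vanishes — a slightly more careful phrasing of the same cancellation.
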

\begin{proof}
We first recall the Lipschitz formula 
\begin{align*}
\lim_{N\rightarrow \infty} \sum_{n=-N}^N \frac{1}{\tau+n} & = -\pi i+(-2\pi i )\sum_{u>0} q^u=-\pi i + 2\pi i \varphi_1(\tau) ,\\
 \sum_{n\in \Z } \frac{1}{(\tau+n)^r} &= \frac{(-2\pi i)^r}{(r-1)!} \sum_{u>0} u^{r-1} q^u =(2\pi i)^r \varphi_r (\tau) \ \ \ (r\geq2).
\end{align*}
We can divide the summation in the defining series \eqref{e_2_5} into four terms, corresponding to $m=m'=0, m>m'=0,m=m'>0$, and $m>m'>0$, where the first term is zero for the cases $Z^{\even \odd}_{r,s},Z^{\odd \even}_{r,s}$, and $Z^{\odd \odd}_{r,s}$ (for convenience, we sometimes drop the variable $\tau$). We only prove the most involved case of $Z^{\odd \odd}_{r,s}$. In this case, we obtain
\[ Z^{\odd \odd}_{r,s} =\frac{1}{(2\pi i)^{r+s}} \Bigl(  \sum_{\begin{subarray}{c}  m=m'>0 \\ n>n' \\ m,m'\in \odd \\ n,n' \in \Z \end{subarray}}  +\sum_{\begin{subarray}{c}  m>m'>0  \\ m,m'\in \odd \\ n,n' \in \Z  \end{subarray}} \Bigr) \frac{1}{(m\tau+n)^r (m'\tau+n')^s} .\]
The second term is easily seen to be 
$$\sum_{\begin{subarray}{c} m>m'>0 \\ m\in\odd, m'\in \odd \end{subarray} } \varphi_r (m\tau) \varphi_s (m'\tau). $$
For the calculation of the first term, we need the partial fraction decomposition 
\begin{align}\label{e_3_3}
\notag \frac{1}{(\tau+n)^r (\tau + n')^s} =& (-1)^s \sum_{i=0}^{r-1} \binom{s+i-1}{i} \frac{1}{(\tau+n)^{r-i}}\cdot \frac{1}{(n-n')^{s+i}} \\
 &+ \sum_{j=0}^{s-1} (-1)^j \binom{r+j-1}{j} \frac{1}{(\tau+n')^{s-j}}\cdot \frac{1}{(n-n')^{r+j}} .
\end{align}
Let $h=n-n'$. Then $h$ is a positive integer. Using \eqref{e_3_3}, the first term can be calculated as
\begin{align*}
&\frac{1}{(2\pi i)^{r+s}} \sum_{\begin{subarray}{c}  m=m'>0 \\ n>n' \\ m,m'\in \odd, n,n' \in \Z \end{subarray}}   \frac{1}{(m\tau+n)^r (m'\tau+n')^s} =\frac{1}{(2\pi i)^{r+s}} \sum_{m\in \odd_{>0}} \sum_{\begin{subarray}{c} n>n' \\ n, n' \in \Z \end{subarray}} \frac{1}{(m\tau+n)^r (m\tau+n')^s} \\
=&\frac{1}{(2\pi i)^{r+s}} \sum_{m\in \odd_{>0} } \sum_{\begin{subarray}{c} n\in \Z \\ h \in \Z_{>0}  \end{subarray}} \left\{ (-1)^s \sum_{i=0}^{r-1} \binom{s+i-1}{i} \frac{1}{(m\tau+n)^{r-i}}  \frac{1}{h^{s+i}} \right. \\
&+ \left. \sum_{j=0}^{s-1} (-1)^j \binom{r+j-1}{j}  \frac{1}{(m\tau +n-h)^{s-j}}   \frac{1}{h^{r+j}} \right\} \\
= & (-1)^s \sum_{i=0}^{r-1} \binom{s+i-1}{i}  \sum_{h\in \Z_{>0}} \frac{(2\pi i)^{-s-i}}{h^{s+i}}  \sum_{m\in \odd_{>0}} \sum_{n \in \Z} \frac{(2\pi i)^{-r+i}}{(m\tau+n)^{r-i}}  \\
& + \sum_{j=0}^{s-1} (-1)^j \binom{r+j-1}{j} \sum_{h\in \Z_{>0}} \frac{(2\pi j)^{-r-j}}{h^{r+j}} \sum_{m \in \odd_{>0}} \sum_{n\in \Z} \frac{(2\pi j)^{-s+j}}{(m\tau +n-h)^{s-j}} \\
= & (-1)^s \sum_{i=0}^{r-2} \binom{s+i-1}{i}  \cz (s+i) \sum_{m\in \odd_{>0}} \varphi_{r-i} (m \tau)\\
& + \sum_{j=0}^{s-2} (-1)^j \binom{r+j-1}{j} \cz (r+j) \sum_{m\in \odd_{>0}} \varphi_{s-j} (m\tau) \\
=& \sum_{p+h=r+s} \left\{ (-1)^s \binom{p-1}{s-1}  + (-1)^{p+r} \binom{p-1}{r-1}\right\} \cz(p)  \sum_{m\in \odd_{>0} } \varphi_h (m\tau)  .
\end{align*}
The cancellation of the terms for $i=r-1$ and $j=s-1$ in the third equality can be justified by computing Cauchy principal values. The final equality is obtained by setting $ s+i=p,r-i=h$ in the first term and $r+j=p,s-j=h$ in the second. This completes the proof for $Z^{\odd \odd}_{r,s}$, the verification of the other cases being left to the reader. 
\end{proof}

\noindent
For a positive integer $r$ and non-negative integer $s$, we define 
\[ f^{\odd}_r (\tau) = \sum_{m\in \odd_{>0}} \varphi_r (m\tau) ,\ f^{\even}_r (\tau) = \sum_{m\in \even_{>0}} \varphi_r (m\tau) \]
and
\[ \overline{f}^{\odd}_s (\tau) = - \sum_{m\in \odd_{>0}} m \varphi_{s+1} (m\tau) , \ \overline{f}^{\even}_s (\tau) = - \sum_{m\in \even_{>0}} m \varphi_{s+1} (m\tau) .\]
We note that for any positive integer $k$, we have 
$$Z^{\odd}_k(\tau)=f^{\odd}_k (\tau),Z^{\even}_k(\tau)= \cz (k) + f^{\even}_k(\tau), Z^{\odd}_k(\tau)'=k \overline{f}^{\odd}_k (\tau),Z^{\even}_k(\tau)'=k \overline{f}^{\even}_k (\tau). $$

To extend the definition of the double Eisenstein series $Z^{\even \odd}_{r,s},Z^{\odd \even}_{r,s}$, and $Z^{\odd \odd}_{r,s}$ to any positive integers $r$ and $s$, we need the correction terms $\varepsilon^{\even \odd}_{r,s} (\tau) ,\varepsilon^{\odd \even}_{r,s} (\tau) $ and $\varepsilon^{\odd \odd}_{r,s} (\tau) $. This is necessary because we require the extended double Eisenstein series to satisfy the double shuffle relations (see \cite{gkz,kt}). We set 
\begin{align*}
\varepsilon^{\even \odd}_{r,s} (\tau) &= \delta_{r,2} \overline{f}^{\odd}_s(\tau) - \delta_{r,1}  \overline{f}^{\odd}_{s-1}(\tau) + \delta_{s,1}  \overline{f}^{\even}_{r-1} (\tau) + \delta_{r,1}\delta_{s,1} \alpha_1  ,\\
\varepsilon^{\odd \even}_{r,s} (\tau) &=   \delta_{r,2} \overline{f}^{\even}_s(\tau) - \delta_{r,1}  \overline{f}^{\even}_{s-1}(\tau) + \delta_{s,1}  \overline{f}^{\odd}_{r-1} (\tau) + \delta_{r,1}\delta_{s,1} \alpha_2   ,\\
\varepsilon^{\odd \odd}_{r,s} (\tau) &=   \delta_{r,2} \overline{f}^{\odd}_s(\tau) - \delta_{r,1}  \overline{f}^{\odd}_{s-1}(\tau) + \delta_{s,1} (  \overline{f}^{\odd}_{r-1} (\tau) +2  f^{\odd}_{r} (\tau) )+ \delta_{r,1}\delta_{s,1} \alpha_3,
\end{align*}
where $\alpha_1=-\alpha_2=\overline{f}_0^{\odd} (\tau)$ and $ \alpha_3=2\overline{f}^{\odd}_0 (\tau) + \overline{f}_0^{\even} (\tau)$.

\begin{definition}\label{5}
For positive integers $r$ and $s$, we define 
\begin{align*}
Z^{\even \odd}_{r,s} (\tau) &= \sum_{\begin{subarray}{c} m>m'>0 \\ m\in\even, m'\in \odd \end{subarray} } \varphi_r (m\tau) \varphi_s (m'\tau)+\frac{1}{4} \varepsilon^{\even \odd}_{r,s} (\tau) , \\
Z^{\odd \even}_{r,s} (\tau) &= \sum_{\begin{subarray}{c} m>m'>0 \\ m\in\odd, m'\in \even \end{subarray} } \varphi_r (m\tau) \varphi_s (m'\tau) + \cz (s) f^{\odd}_r (\tau) +\frac{1}{4} \varepsilon^{\odd \even}_{r,s} (\tau) , \\
 Z^{\odd \odd}_{r,s} (\tau) &=\sum_{\begin{subarray}{c} m>m'>0 \\ m\in\odd, m'\in \odd \end{subarray} } \varphi_r (m\tau) \varphi_s (m'\tau)\\
 &  + \sum_{p+h=r+s} \left( (-1)^s \binom{p-1}{s-1} + (-1)^{p+r} \binom{p-1}{r-1} \right) \cz(p)  f^{\odd}_h (\tau)+ \frac{1}{4}  \varepsilon^{\odd \odd}_{r,s} (\tau)  .
\end{align*}
\end{definition}

\noindent
We can now formulate our double shuffle relations.

\begin{theorem}\label{6}
For positive integers $r$ and $s$, we have
\begin{align}
\label{e_3_4}
Z^{\odd}_r (\tau) Z^{\even}_s(\tau) + \dfrac{1}{4} (\delta_{r,2} \overline{f}_s^{\even} (\tau) + \delta_{s,2} \overline{f}_r^{\odd} (\tau) )& = Z^{\odd \even}_{r,s}(\tau) +Z^{\even \odd}_{s,r}(\tau) \\
\notag & = \sum_{i+j=r+s} \binom{i-1}{r-1} Z^{\odd \even}_{i,j}(\tau) +\sum_{i+j=r+s} \binom{i-1}{s-1} Z^{\odd \odd}_{i,j}(\tau), \\
\label{e_3_5}
Z^{\odd}_r (\tau) Z^{\odd}_s(\tau) + \frac{1}{4} (\delta_{r,2} \overline{f}_s^{\odd} (\tau) + \delta_{s,2} \overline{f}_r^{\odd} (\tau) ) &= Z^{\odd \odd}_{r,s}(\tau)+Z^{\odd \odd}_{s,r}(\tau)+Z^{\odd}_{r+s} (\tau)\\
\notag & = \sum_{i+j=r+s} \left( \binom{i-1}{r-1}+\binom{i-1}{s-1} \right) Z^{\even \odd}_{i,j} (\tau).
\end{align}
\end{theorem}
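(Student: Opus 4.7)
The plan is to establish both identities by combining the series and partial-fractions representations of the double Eisenstein series, following the same template used for the $\SL$-case in \cite{gkz} and the level $2$ case in \cite{kt}. Each of \eqref{e_3_4} and \eqref{e_3_5} actually bundles two independent statements: a harmonic (stuffle) relation giving the first equality and a shuffle relation giving the second.

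For $r, s \geq 3$, where everything converges absolutely, the harmonic relation follows by expanding the product
\[
Z^{\odd}_r(\tau) Z^{\even}_s(\tau) = \frac{1}{(2\pi i)^{r+s}} \sum_{\substack{\lambda, \mu > 0 \\ \lambda \in \odd\tau+\Z,\, \mu \in \even\tau+\Z}} \frac{1}{\lambda^r \mu^s}
\]
and partitioning according to $\lambda > \mu$, $\lambda < \mu$, and $\lambda = \mu$. For \eqref{e_3_4} the parity classes $\odd\tau+\Z$ and $\even\tau+\Z$ have disjoint $\tau$-coefficients, so the diagonal piece is empty and we recover $Z^{\odd\even}_{r,s} + Z^{\even\odd}_{s,r}$ directly. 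For \eqref{e_3_5} the diagonal $\lambda = \mu$ contributes the extra term $Z^{\odd}_{r+s}(\tau)$. For the shuffle equalities I would apply the partial-fraction decomposition \eqref{e_3_3} to $1/((m\tau+n)^r(m'\tau+n')^s)$ and evaluate the inner sums over $n, n'$ by the Lipschitz formula; reindexing by $p = s+i, h = r-i$ in the first block and $p = r+j, h = s-j$ in the second recombines the terms into the binomial sums $\sum_{i+j=r+s} \binom{i-1}{r-1} Z^{\odd\even}_{i,j}$ and $\sum_{i+j=r+s} \binom{i-1}{s-1} Z^{\odd\odd}_{i,j}$ on the right of \eqref{e_3_4}, and analogously for \eqref{e_3_5}. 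This is essentially the same manipulation already carried out in the final display of the proof of Proposition \ref{4}, with the added bookkeeping of the outer parity constraints on $m$ and $m'$.

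The main obstacle is the boundary behavior when $r \in \{1,2\}$ or $s \in \{1,2\}$, where the defining series are at best conditionally convergent. The correction terms $\varepsilon^{\even\odd}_{r,s}$, $\varepsilon^{\odd\even}_{r,s}$, $\varepsilon^{\odd\odd}_{r,s}$ and the constants $\alpha_1, \alpha_2, \alpha_3$ in Definition \ref{5} have been designed precisely so that both the harmonic and shuffle identities hold on the nose. I would verify this case by case: when $r=1$ or $s=1$ the first Lipschitz formula $\sum_{n=-N}^{N} 1/(\tau+n) = -\pi i + 2\pi i \varphi_1(\tau)$ and careful Cauchy principal values must intervene; when $r=2$ or $s=2$ the $q\, d/dq$-derivative enters through $\overline{f}^{\odd}_s$ and $\overline{f}^{\even}_s$, mirroring the $\delta_{r,2}$ and $\delta_{s,2}$ corrections already present in the definitions of $P^{\odd\even}_{r,s}(\tau)$ and $P^{\odd\odd}_{r,s}(\tau)$. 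The most delicate subcase is $(r,s)=(1,1)$, where all four Kronecker deltas fire simultaneously and the constants $\alpha_i$ are needed to close the identity. Matching boundary contributions against both sides of \eqref{e_3_4} and \eqref{e_3_5} is routine but tedious; I would organize the verification in a table indexed by the quadruple $(\delta_{r,1},\delta_{s,1},\delta_{r,2},\delta_{s,2})$, reusing the boundary analysis from \cite{kt} almost verbatim.
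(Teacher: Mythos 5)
Your division of the problem into an ``absolutely convergent interior range'' plus ``boundary cases'' does not isolate the difficulty, and the part you defer is the actual content of the theorem. First, even for $r,s\geq 3$ the shuffle sides of \eqref{e_3_4} and \eqref{e_3_5} contain the terms with $j=1$ (for instance $Z^{\odd \even}_{r+s-1,1}(\tau)$, $Z^{\odd \odd}_{r+s-1,1}(\tau)$, $Z^{\even \odd}_{r+s-1,1}(\tau)$), which lie outside the region $r>2$, $s>1$ of absolute convergence of \eqref{e_2_5} and are only defined through Definition \ref{5}, with the correction terms $\tfrac14\varepsilon_{i,1}$ built in. So the partial-fraction manipulation you invoke never stays within the convergent range: after decomposing $1/(\lambda^r\mu^s)$ you would still have to show that the divergent pieces you produce match exactly the $\delta_{j,1}$-corrections of Definition \ref{5}. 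Moreover, the computation you point to (the final display in the proof of Proposition \ref{4}, which uses \eqref{e_3_3} in the variable $\tau$ on the diagonal $m=m'$) yields the Fourier expansion of a single convergent $Z^{\odd\odd}_{r,s}$; it is not the mechanism that produces the binomial recombination $\sum_{i+j=r+s}\binom{i-1}{r-1}Z^{\odd\even}_{i,j}+\sum_{i+j=r+s}\binom{i-1}{s-1}Z^{\odd\odd}_{i,j}$, which would instead require an Euler--Nielsen type decomposition of $1/(\lambda^r\mu^s)$ in terms of $\lambda+\mu$, precisely where the $j=1$ divergences appear. The paper sidesteps all of this by never manipulating the double series in the proof of Theorem \ref{6}: it works entirely with the $q$-expansions of Definition \ref{5}, separates the imaginary parts (the $\cz(p)$, $p$ odd, contributions) by a short generating-function argument, and then verifies the real parts uniformly in all $r,s\geq1$ by reducing \eqref{e_3_4} and \eqref{e_3_5} to the two-variable identities \eqref{e_3_6} and \eqref{e_3_7}, checked with the closed forms $\beta(X)=\tfrac12(1/X-1/(e^X-1))$, $f^{\odd}(X)=-\sum_{u>0}e^{-uX}q^u/(1-q^{2u})$, $\overline f^{\odd}(X)$, $f^{\even\odd}(X,Y)$, etc., where the singular terms (e.g.\ the $\coth\bigl(\tfrac{X-Y}{2}\bigr)$ and $1/(X-Y)$ pieces) cancel explicitly.

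Second, your plan to handle the small-index cases by ``reusing the boundary analysis from \cite{kt} almost verbatim'', organized in a table over $(\delta_{r,1},\delta_{s,1},\delta_{r,2},\delta_{s,2})$, is not available as stated: the parity-decorated series $Z^{\even\odd}_{r,s}(\tau)$, $Z^{\odd\even}_{r,s}(\tau)$, $Z^{\odd\odd}_{r,s}(\tau)$ and their corrections $\varepsilon^{\even\odd}_{r,s}$, $\varepsilon^{\odd\even}_{r,s}$, $\varepsilon^{\odd\odd}_{r,s}$, including the constants $\alpha_1,\alpha_2,\alpha_3$, are introduced in this paper, and Theorem \ref{6} is exactly the assertion that these particular choices close both families of relations; there is no off-the-shelf verification to import. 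Declaring the matching of boundary contributions ``routine but tedious'' without exhibiting the computation (for example at $(r,s)=(1,1)$, where the constants must absorb the singular terms of the generating functions) leaves the proof without its substance. The only part of your proposal that is genuinely carried out is the harmonic (first) equality in the convergent range, where the partition of $Z^{\odd}_r Z^{\even}_s$ and $Z^{\odd}_r Z^{\odd}_s$ by $\lambda>\mu$, $\lambda<\mu$, $\lambda=\mu$ (empty diagonal for \eqref{e_3_4}, diagonal equal to $Z^{\odd}_{r+s}$ for \eqref{e_3_5}) is correct, and Proposition \ref{4} identifies those series with the objects of Definition \ref{5}.
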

\begin{proof}
Let $k=r+s$. The proof will be divided into two steps. We first prove the equalities of the imaginary parts in Theorem \ref{6}. The only imaginary parts that appear come from the constant terms $\cz (s)$ of $Z^{\even}_s(\tau)\ (s:{\rm odd})$, $\cz (s)$ in $Z^{\odd \even}_{r,s}(\tau)\ (s:{\rm odd})$ or $\cz(p) \ (p:{\rm odd})$ in $Z^{\odd \odd}_{r,s}(\tau)$. In \eqref{e_3_5}, no imaginary part appears since those of $Z^{\odd \odd}_{r,s}(\tau)+Z^{\odd \odd}_{s,r}(\tau)$ cancels. To prove the equalities of the imaginary parts of \eqref{e_3_4}, we consider the generating functions as follows:
\begin{align*}
Z^{\odd \odd}_k(X,Y)&:=\sum_{r+s=k} {\rm Im\ }Z^{\odd \odd}_{r,s} X^{r-1}Y^{s-1} \\
& = \sum_{r+s=k}\sum_{\begin{subarray}{c} p+h=k \\ p:{\rm odd} \end{subarray} } \left( (-1)^s \binom{p-1}{s-1} + (-1)^{p+r} \binom{p-1}{r-1} \right) \cz(p)  f^{\odd}_h (\tau)X^{r-1}Y^{s-1} \\
&= \sum_{\begin{subarray}{c} p+h=k\\ p:{\rm odd} \end{subarray}} (Y^{h-1}-X^{h-1}) (Y-X)^{p-1} \cz (p) f_h^{\odd} (\tau) ,\\
Z^{\odd \even}_k(X,Y)&:=\sum_{r+s=k} {\rm Im\ }Z^{\odd \even}_{r,s} X^{r-1}Y^{s-1}  = \sum_{\begin{subarray}{c} r+s=k\\ s:{\rm odd} \end{subarray}} \cz(s) f_r^{\odd} (\tau)X^{r-1} Y^{s-1}.
\end{align*}
We note that the imaginary part of the R.H.S. of \eqref{e_3_4} is the coefficient of $X^{r-1} Y^{s-1}$ of $Z^{\odd \even}_k(X+Y,Y)+Z^{\odd \odd}_k(X+Y,X)$. Since we have 
\begin{align*}
&Z^{\odd \even}_k(X+Y,Y)+Z^{\odd \odd}_k(X+Y,X) \\
 =&\sum_{\begin{subarray}{c} r+s=k\\ s:{\rm odd} \end{subarray}} \cz(s) f_r^{\odd} (\tau)(X+Y)^{r-1} Y^{s-1} +  \sum_{\begin{subarray}{c} r+s=k\\ s:{\rm odd} \end{subarray}} (X^{r-1}-(X+Y)^{r-1}) (-Y)^{s-1} \cz (s) f_r^{\odd} (\tau) \\
 = &\sum_{\begin{subarray}{c} r+s=k\\ s:{\rm odd} \end{subarray}} \left( (X+Y)^{r-1} Y^{s-1} + (X^{r-1}-(X+Y)^{r-1}) Y^{s-1} \right)  \cz (s) f_r^{\odd} (\tau)\\
 =&\sum_{\begin{subarray}{c} r+s=k\\ s:{\rm odd} \end{subarray}}   \cz (s) f_r^{\odd} (\tau) X^{r-1} Y^{s-1} ,
\end{align*}
 the assertion follows. Secondly, we prove the equalities of the real parts in Theorem \ref{6}. Again we use generating functions. Define
\begin{align*}
f^{\even \odd}_{r,s} (\tau)& =  \sum_{\begin{subarray}{c} m>m'>0 \\ m\in\even, m'\in \odd \end{subarray} } \varphi_r (m\tau) \varphi_s (m'\tau) , \ f^{\odd \even}_{r,s} (\tau) =  \sum_{\begin{subarray}{c} m>m'>0 \\ m\in\odd, m'\in \even \end{subarray} } \varphi_r (m\tau) \varphi_s (m'\tau) ,\\
f^{\odd \odd}_{r,s} (\tau)& =  \sum_{\begin{subarray}{c} m>m'>0 \\ m\in\odd, m'\in \odd \end{subarray} } \varphi_r (m\tau) \varphi_s (m'\tau) ,\\
\beta_{r,s}^{\odd \odd} (\tau) &=   \sum_{p+h=r+s} \left( (-1)^s \binom{p-1}{s-1} + (-1)^{p+r} \binom{p-1}{r-1} \right) \beta_p  f^{\odd}_h (\tau) ,
\end{align*}
where $\beta_p = - B_p /2p! ( = \cz(p) ,p: {\rm even})$. Consider 
\begin{align}
\notag Z^{\odd}(X) & := \sum_{r\geq1} Z^{\odd}_r X^{r-1} + \alpha_4 \cdot X = \sum_{r\geq1 }f^{\odd}_r (\tau)X^{r-1} + \alpha_4\cdot X,\\
\notag Z^{\even \odd}(X,Y)& := \sum_{r,s\geq1} {\rm Re \ } Z^{\even \odd}_{r,s} X^{r-1} Y^{s-1}= \sum_{r,s\geq1} \Bigl( f^{\even \odd}_{r,s} (\tau)+\frac{1}{4} \varepsilon^{\even \odd}_{r,s} (\tau) \Bigr)  X^{r-1} Y^{s-1}, 
\end{align}
\begin{align}
\notag Z^{\odd \even}(X,Y) &:= \sum_{r,s\geq1} {\rm Re \ } Z^{\odd \even}_{r,s} X^{r-1} Y^{s-1} = \sum_{r,s\geq1}\Bigl(  f^{\odd \even}_{r,s} (\tau)+\beta_s f_r^{\odd} (\tau) +\frac{1}{4} \varepsilon_{r,s}^{\odd \even} (\tau) \Bigr) X^{r-1}Y^{s-1} ,\\
\notag Z^{\odd \odd}(X,Y) &:= \sum_{r,s\geq1} {\rm Re \ } Z^{\odd \odd}_{r,s} X^{r-1} Y^{s-1} = \sum_{r,s\geq1 } \Bigl( f^{\odd \odd}_{r,s} (\tau) +\beta^{\odd \odd}_{r,s} (\tau) +\frac{1}{4} \varepsilon_{r,s}^{\odd \odd} (\tau) \Bigr) X^{r-1} Y^{s-1}, \\
\notag P^{\odd \even}(X,Y) &:=  \sum_{r,s\geq1} {\rm Re \ }\left( Z^{\odd}_r \cdot Z^{\even}_s + \dfrac{1}{4} (\delta_{r,2} \overline{f}_s^{\even} (\tau) + \delta_{s,2} \overline{f}_r^{\odd} (\tau) ) \right) X^{r-1} Y^{s-1} , \\
\label{e_e} P^{\odd \odd}(X,Y) &:=  \sum_{r,s\geq1} {\rm Re \ }\left( Z^{\odd}_r \cdot Z^{\odd}_s + \dfrac{1}{4} (\delta_{r,2} \overline{f}_s^{\odd} (\tau) + \delta_{s,2} \overline{f}_r^{\odd} (\tau) )  \right) X^{r-1}Y^{s-1} ,
\end{align}
where $\alpha_4 =- \alpha_3/2$. Then, it is sufficient to prove that 
\begin{align}
\label{e_3_6}
P^{\odd \even} (X,Y) &= Z^{\odd \even}(X,Y)+Z^{\even \odd} (Y,X) = Z^{\odd \even}(X+Y,Y)+Z^{\odd \odd}(X+Y,X) ,\\
\label{e_3_7}
P^{\odd \odd}(X,Y)&= Z^{\odd \odd}(X,Y)+Z^{\odd \odd}(Y,X)+\frac{Z^{\odd}(X)-Z^{\odd}(Y)}{X-Y} =Z^{\even \odd}(X+Y,Y)+Z^{\even \odd}(X+Y,X).
\end{align}
Now we check the equalities in (20) and (21). Write $\gamma (X)$ and $\gamma (X,Y)$ for the generating functions $\sum_{k\geq1} \gamma_k X^{k-1} $ and $\sum_{r,s\geq1} \gamma_{r,s} X^{r-1}Y^{s-1} $ associated with sequences $\{ \gamma_k \} $ and $\{ \gamma_{r,s} \}$ indexed by one and two integers, respectively. Then we have
\begin{align*}
\beta(X) &= \sum_{k\geq1} \beta_k X^{k-1} =\frac{1}{2} \left( \frac{1}{X} - \frac{1}{e^X-1} \right), \\
f^{\odd} (X) &= \sum_{k\geq1} f^{\odd}_k (\tau) X^{k-1} = - \sum_{u>0} e^{-uX} \frac{q^u}{1-q^{2u}} , \\
f^{\even} (X) &= \sum_{k\geq1} f^{\even}_k(\tau) X^{k-1} = - \sum_{u>0} e^{-uX} \frac{q^{2u}}{1-q^{2u}}, \\
\overline{f}^{\odd} (X)& = \sum_{k\geq1} \overline{f}^{\odd}_k (\tau) X^{k-1} = \frac{1}{X} \left( \sum_{u>0} e^{-uX} \frac{2q^u}{(1-q^{2u})^2} + f^{\odd} (X) - \overline{f}^{\odd}_0 (\tau) \right) ,\\
\overline{f}^{\even} (X)& = \sum_{k\geq1} \overline{f}^{\odd}_k (\tau) X^{k-1} = \frac{1}{X} \left( \sum_{u>0} e^{-uX} \frac{2q^{2u}}{(1-q^{2u})^2} -\overline{f}^{\even}_0 (\tau) \right),\\
f^{\even \odd} (X,Y) &= \sum_{r,s\geq1} f^{\even \odd}_{r,s} (\tau) X^{r-1}Y^{s-1} = \sum_{u,v>0}e^{-uX-vY} \sum_{\begin{subarray}{c} m>m'>0 \\ m\in \even, m' \in \odd \end{subarray} } q^{um+vm'} \\
&= \sum_{u,v>0} e^{-uX-vY} \frac{q^u}{1-q^{2u}} \frac{q^{u+v} }{1-q^{2(u+v)}}, \\
f^{\odd \even} (X,Y) &= \sum_{r,s\geq1} f^{\odd \even}_{r,s} (\tau) X^{r-1}Y^{s-1} = \sum_{u,v>0} e^{-uX-vY} \frac{q^u}{1-q^{2u}} \frac{q^{2(u+v)} }{1-q^{2(u+v)}} ,\\
f^{\odd \odd} (X,Y) &= \sum_{r,s\geq1} f^{\odd \odd}_{r,s} (\tau) X^{r-1}Y^{s-1} = \sum_{u,v>0} e^{-uX-vY} \frac{q^{2u}}{1-q^{2u}} \frac{q^{u+v} }{1-q^{2(u+v)}} ,
\end{align*}
\begin{align*}
\beta^{\odd \odd} (X,Y) &= \sum_{r,s\geq1} \beta_{r,s}^{\odd \odd} (\tau) X^{r-1} Y^{s-1} \\
& = \sum_{p,h\geq1} \beta_p f^{\odd}_h (\tau) \sum_{r+s=p+h}  \left( (-1)^s \binom{p-1}{s-1} + (-1)^{p+r} \binom{p-1}{r-1} \right) X^{r-1} Y^{s-1} \\
&= \sum_{p,h\geq1} \beta_p f^{\odd}_h (\tau)  (X-Y)^{p-1} (Y^{h-1} -X^{h-1} ) = \beta(X-Y) (f^{\odd} (Y) - f^{\odd} (X) ) ,\\
\varepsilon^{\even \odd}(X,Y)& = X\overline{f}^{\odd} (Y) -Y \overline{f}^{\odd} (Y) - \overline{f}^{\odd}_0 (\tau) + X \overline{f}^{\even} (X) + \overline{f}^{\even}_0 (\tau) +\alpha_1 ,\\
\varepsilon^{\odd \even} (X,Y)& = X\overline{f}^{\even} (Y) -Y \overline{f}^{\even} (Y) - \overline{f}^{\even}_0 (\tau) + X \overline{f}^{\odd} (X) + \overline{f}^\odd_0 (\tau) +\alpha_2 ,\\
\varepsilon^{\odd \odd}(X,Y) &= X\overline{f}^{\odd} (Y) -Y \overline{f}^{\odd} (Y) +X \overline{f}^{\odd} (X)+2f^{\odd} (X)+\alpha_3 .
\end{align*} 
By the definitions \eqref{e_e}, we find 
\begin{align*}
Z^{\odd}(X)&= f^{\odd}(X)+\alpha_4 \cdot X ,\\
Z^{\even \odd}(X,Y)&= f^{\even \odd} (X,Y) + \frac{1}{4} \varepsilon^{\even \odd} (X,Y) , \\
Z^{\odd \even}(X,Y) &= f^{\odd \even} (X,Y) + f^{\odd} (X) \beta(Y) +\frac{1}{4} \varepsilon^{\odd \even} (X,Y) ,\\
Z^{\odd \odd}(X,Y) &= f^{\odd \odd} (X,Y) +\beta^{\odd \odd} (X,Y) + \frac{1}{4} \varepsilon^{\odd \odd} (X,Y) ,\\
P^{\odd \even}(X,Y) &= f^{\odd}(X) f^{\even} (Y) +f^{\odd} (X) \beta(Y) +\frac{1}{4} (  X \overline{f}^{\even} (Y) +Y \overline{f}^{\odd} (X) ), \\
P^{\odd \odd}(X,Y)&=  f^{\odd} (X) f^{\odd} (Y) +\frac{1}{4} (X\overline{f}^{\odd} (Y) + Y \overline{f}^{\odd} (X) ).
\end{align*}

\noindent
For the first equality in \eqref{e_3_7}, we compute
\begin{align*}
&f^{\odd \odd} (X,Y)  + f^{\odd \odd} (Y,X) \\
&= \sum_{u,v>0} e^{-uX-vY} \left( \frac{q^{2u}}{1-q^{2u}} + \frac{q^{2v}}{1-q^{2v}} \right) \frac{q^{u+v} }{1-q^{2(u+v)}} \\
&= \sum_{u,v>0} e^{-uX-vY} \left( \frac{q^u}{1-q^{2u}} \frac{q^v}{1-q^{2v}} - \frac{q^{u+v}}{1-q^{2(u+v)}} \right) \\
&= f^{\odd} (X) f^{\odd} (Y) - \sum_{w>u>0} e^{-(w-u)Y-uX} \frac{q^w}{1-q^{2w}} \quad (w=u+v) \\
&= f^{\odd} (X) f^{\odd} (Y) -\sum_{w>0} \frac{q^w}{1-q^{2w}}e^{-wY} \left( e^{Y-X} \frac{1-e^{(Y-X)(w-1)}}{1-e^{Y-X}} \right) \\
&= f^{\odd} (X) f^{\odd} (Y) + \frac{e^Y}{e^X-e^Y} f^{\odd} (Y) - \frac{e^X}{e^X-e^Y} f^{\odd} (X) \\
&=  f^{\odd} (X) f^{\odd} (Y) - \frac{1}{2} (f^{\odd} (X) + f^{\odd} (Y) ) -\frac{1}{2} \coth \left( \frac{X-Y}{2} \right)  (f^{\odd} (X) - f^{\odd} (Y) ),
\end{align*}
\begin{align*}
 \beta^{\odd \odd} (X,Y) +\beta^{\odd \odd}(Y,X) &= (\beta(Y-X)-\beta(X-Y))(f^{\odd} (X) - f^{\odd} (Y)) \\
&= - \frac{f^{\odd} (X)-f^{\odd} (Y) }{X-Y} + \frac{1}{2} \coth \left( \frac{X-Y}{2} \right) (f^{\odd} (X)-f^{\odd} (Y)), \\
\varepsilon^{\odd \odd} (X,Y) + \varepsilon^{\odd \odd}(Y,X) 
&= X\overline{f}^{\odd} (Y) + Y \overline{f}^{\odd} (X) + 2 f^{\odd} (X) +2f^{\odd} (Y) +2 \alpha_3.
\end{align*}
\noindent
Combining these with $(Z^{\odd}(X)-Z^{\odd}(Y))/(X-Y)$, we have
\begin{align*}
&Z^{\odd \odd}(X,Y)+Z^{\odd \odd}(Y,X)+\frac{Z^{\odd}(X)-Z^{\odd}(Y)}{X-Y} \\
&= f^{\odd} (X) f^{\odd} (Y) - \frac{1}{2} (f^{\odd} (X) + f^{\odd} (Y) ) -\frac{1}{2} \coth \left( \frac{X-Y}{2} \right)  (f^{\odd} (X) - f^{\odd} (Y) )\\
&- \frac{f^{\odd} (X)-f^{\odd} (Y) }{X-Y} + \frac{1}{2} \coth \left( \frac{X-Y}{2} \right) (f^{\odd} (X)-f^{\odd} (Y))\\
&+\frac{1}{4} \left(  X\overline{f}^{\odd} (Y) + Y \overline{f}^{\odd} (X) + 2 f^{\odd} (X) +2f^{\odd} (Y) +2 \alpha_3 \right) + \frac{f^{\odd} (X) - f^{\odd} (Y) }{X-Y} + \alpha_4 \\
&= f^{\odd} (X) f^{\odd} (Y) +\frac{1}{4} (X\overline{f}^{\odd} (Y) + Y \overline{f}^{\odd} (X) ) .
\end{align*}

\noindent
For the second equality of \eqref{e_3_7}, we proceed as follows:
\begin{align*}
&f^{\even \odd}(X+Y,X) +f^{\even \odd} (X+Y,Y) \\
&= \sum_{u,v>0}\left(  e^{-(u+v)X-uY} + e^{-uX-(u+v)Y} \right)  \frac{q^{u}}{1-q^{2u}} \frac{q^{u+v}}{1-q^{2(u+v)}} \\
&= \left( \sum_{w>u>0} +\sum_{u>w>0} \right) e^{-uX-wY} \frac{q^{u}}{1-q^{2u}} \frac{q^w}{1-q^{2w}} \\
&= \left( \sum_{w,u>0} -\sum_{w=u>0} \right) e^{-uX-wY} \frac{q^{u}}{1-q^{2u}} \frac{q^w}{1-q^{2w}} \\
&= f^{\odd} (X) f^{\odd} (Y) - \sum_{u>0} e^{-u(X+Y)} \frac{q^{2u}}{(1-q^{2u})^2} \\
&= f^{\odd} (X) f^{\odd} (Y)  - \frac{1}{2} \overline{f}^{\even}_0(\tau) - \frac{1}{2} (X+Y) \overline{f}^{\even} (X+Y),\\
&\varepsilon^{\even \odd} (X+Y,X) + \varepsilon^{\even \odd} (X+Y,Y) \\
&= Y\overline{f}^{\odd} (X) + X\overline{f}^{\odd} (Y) + 2 (X+Y) \overline{f}^{\even} (X+Y) + 2 (\alpha_1- \overline{f}^{\odd}_0 (\tau) +\overline{f}^{\even}_0 (\tau)) .
\end{align*}
\noindent
Summing these up, we have
\begin{align*}
& Z^{\even \odd} (X+Y,Y)+Z^{\even \odd}(X+Y,X)\\
&=f^{\odd} (X) f^{\odd} (Y)  - \frac{1}{2} \overline{f}^{\even}_0(\tau) - \frac{1}{2} (X+Y) \overline{f}^{\even} (X+Y)\\
&+\frac{1}{4} \left( Y\overline{f}^{\odd} (X) + X\overline{f}^{\odd} (Y) + 2 (X+Y) \overline{f}^{\even} (X+Y) + 2 (\alpha_1- \overline{f}^{\odd}_0 (\tau) +\overline{f}^{\even}_0 (\tau)) \right) \\
&= f^{\odd} (X) f^{\odd} (Y) +\frac{1}{4} (X\overline{f}^{\odd} (Y) + Y \overline{f}^{\odd} (X) ) .
\end{align*}


\noindent
For the second equality of \eqref{e_3_6}, we compute
\begin{align*}
& f^{\odd \even} (X,Y) + f^{\even \odd} (Y,X)  \\
&= \sum_{u,v>0} e^{-uX-vY} \left( \frac{q^v}{1-q^{2v}} + \frac{q^{2u+v}}{1-q^{2u}} \right) \frac{q^{u+v} }{1-q^{2(u+v)}} \\
&= \sum_{u,v>0} e^{-uX-vY} \frac{q^u}{1-q^{2u}} \frac{q^{2v}}{1-q^{2v}} = f^{\odd}(X) f^{\even}(Y),\\
&\varepsilon^{\odd \even} (X,Y) + \varepsilon^{\even \odd} (Y,X) = X\overline{f}^{\even} (Y) + Y \overline{f}^{\odd} (X) + \alpha_1+\alpha_2,
\end{align*}
\noindent
to obtain
\begin{align*}
& Z^{\odd \even}(X,Y)+Z^{\even \odd}(Y,X) =  f^{\odd}(X) f^{\even}(Y)  + f^{\odd} (X) \beta (Y) +\frac{1}{4} (X\overline{f}^{\even} (Y) + Y \overline{f}^{\odd} (X) ).
\end{align*}


\noindent
Finally, for the second equality of \eqref{e_3_6}, we similarly compute
\begin{align*}
&f^{\odd \even} (X+Y,Y)+f^{\odd \odd} (X+Y,X) \\
&=\left( \sum_{w>u>0} + \sum_{u>w>0} \right) e^{-uX-vY} \frac{q^u}{1-q^{2u}} \frac{q^{2w} }{1-q^{2w}}\\ 
&=\left( \sum_{w,u>0} - \sum_{u=w>0} \right) e^{-uX-vY} \frac{q^u}{1-q^{2u}} \frac{q^{2w} }{1-q^{2w}} \\
&= f^{\odd}(X) f^{\even} (Y) - \sum_{u>0} e^{-u(X+Y) } \frac{q^{3u}}{(1-q^{2u})^2} \\
&= f^{\odd}(X) f^{\even} (Y) - \sum_{u>0} e^{-u(X+Y) } \left(  \frac{q^{u}}{(1-q^{2u})^2} - \frac{q^{u}}{1-q^{2u}} \right)\\
&= f^{\odd}(X) f^{\even} (Y) -\frac{1}{2} \left( (X+Y) \overline{f}^{\odd} (X+Y) -f^{\odd} (X+Y) +\overline{f}^{\odd}_0(\tau) \right)  -f^{\odd} (X+Y)\\
&= f^{\odd}(X) f^{\even} (Y) -\frac{1}{2}f^{\odd} (X+Y)- \frac{1}{2}(X+Y) \overline{f}^{\odd} (X+Y)- \frac{1}{2} \overline{f}^{\odd}_0(\tau) ,\\
&f^{\odd} (X+Y)\beta(Y) + \beta^{\odd \odd} (X+Y,X) =f^{\odd} (X) \beta(Y),
\end{align*}
\begin{align*}
& \varepsilon^{\odd \even} (X+Y,Y) + \varepsilon^{\odd \odd}(X+Y,X) \\
&= X \overline{f}^{\even} (Y) +Y \overline{f}^{\odd} (X) + 2 (X+Y) \overline{f}^{\odd} (X+Y)+ 2f^{\odd} (X+Y) - \overline{f}^{\even}_0(\tau) +\overline{f}^{\odd}_0 (\tau) +\alpha_3+\alpha_2,
\end{align*}
\noindent
which gives
\begin{align*}
& Z^{\odd \even}(X+Y,Y)+Z^{\odd \odd}(X+Y,X) \\
&= f^{\odd}(X) f^{\even} (Y) -\frac{1}{2}f^{\odd} (X+Y)- \frac{1}{2}(X+Y) \overline{f}^{\odd} (X+Y)- \frac{1}{2} \overline{f}^{\odd}_0(\tau) +f^{\odd} (X) \beta(Y) +\\
&\frac{1}{4} \left(  X \overline{f}^{\even} (Y) +Y \overline{f}^{\odd} (X) + 2 (X+Y) \overline{f}^{\odd} (X+Y)+ 2f^{\odd} (X+Y) - \overline{f}^{\even}_0(\tau) +\overline{f}^{\odd}_0 (\tau) +\alpha_3+\alpha_2 \right) \\
&= f^{\odd}(X) f^{\even} (Y) +f^{\odd} (X) \beta(Y) +\frac{1}{4} (  X \overline{f}^{\even} (Y) +Y \overline{f}^{\odd} (X) ),
\end{align*}
and we are done.

\end{proof}

\section*{Acknowledgements}
The author is grateful to Professor Masanobu Kaneko for initial advice and many useful comments over the course of this work. He also thanks Professor Winfried Kohnen for his information and advice on the references.




\noindent
Koji Tasaka\\
Graduate School of Mathematics, Kyushu University\\
k-tasaka@math.kyushu-u.ac.jp

\end{document}